\documentclass[12pt,reqno]{amsart}

\usepackage[a4paper,margin=1in]{geometry}
\usepackage{amsmath,amssymb,amsthm,mathtools}
\usepackage{bbm}
\usepackage{enumitem}
\usepackage{hyperref}
\usepackage[normalem]{ulem}

\usepackage{pdflscape}
\usepackage{subcaption}
\usepackage{caption}
\usepackage{tikz, float, tikzscale}
\usepackage{pgfplots}
\pgfplotsset{compat=1.18}

\usepackage[T1]{fontenc}
\usepackage{libertinus,libertinust1math}

\theoremstyle{definition}

\newtheorem{assumption}{Assumption}
\theoremstyle{plain}
\newtheorem{thm}{Theorem}
\newtheorem*{thm*}{Theorem}

\newtheorem{lemma}{Lemma}

\title{Quantitative longest-run laws for partial quotients}
\author{Ying Wai Lee}

\begin{document}

\begin{abstract}
    Two longest-run statistics are studied: the longest run of a fixed value and the longest run over all values. Under quantitative mixing and exponential cylinder estimates for constant words, a general theorem is proved. Quantitative almost-sure logarithmic growth is obtained, and eventual two-sided bounds with double-logarithmic error terms are established. For continued-fraction partial quotients, explicit centring constants and double-logarithmic error bounds are derived for both statistics.
\end{abstract}

\maketitle

\section{Introduction}\label{sec:introduction}

Longest-run statistics form a fundamental class of observables at the interface of probability, dynamical systems, and metric number theory. For symbolic processes taking values in a countable alphabet, a natural question is, what is the length of the longest consecutive run of a single symbol?

In probability, for independent sequences, classical Erd\H{o}s--R\'enyi and Erd\H{o}s--R\'ev\'esz laws describe the growth of longest runs; analogous statements persist for weakly dependent sequences under suitable mixing assumptions \cite{grill1987erdHos}. In dynamical systems, longest-run statistics arise as non-additive observables associated with symbolic codings of orbits. In number-theoretic expansions, they quantify the persistence of digits and capture a notion of persistence that is complementary to magnitude-based information.

The attention of the present work is turned to longest-run behaviour for partial quotients, both for a fixed prescribed value and for the maximised run over all values. A general theorem is established for shift-invariant symbolic processes under verifiable mixing and cylinder estimates, yielding quantitative almost-sure longest-run asymptotics with an explicit convergence rate. The required hypotheses are then verified for the Gauss system, and corresponding longest-run results for partial quotients are obtained.

\section{Continued fractions}

Classical work on continued fractions has emphasised magnitude-related phenomena for partial quotients (such as extremes and growth rates), and more recently the range-renewal (distinct-value count) statistic has also been investigated \cite{WuXie2017RangeRenewalCF}. 

For any $x\in[0,1)\setminus\mathbb{Q}$, there exists a unique sequence of positive integers $(a_n)_{n\in\mathbb{N}}$ such that $x$ admits the continued fraction expansion:
\begin{align*}
    x=[a_1,a_2,a_3,\dots]\coloneqq\frac{1}{\displaystyle a_1+\frac{1}{\displaystyle a_2+\frac{1}{a_3+\cdots}}},
\end{align*}
where for any $n\in\mathbb{N}$, $a_n\coloneqq a_n(x)\in\mathbb{N}$ is referred to as $n$-th partial quotient of $x$. 
Define, for any $n\in\mathbb{N}$, the fixed-symbol longest-run function $L_n:([0,1)\setminus\mathbb{Q})\times\mathbb{N}\to\mathbb{N}\cup\{0\}$ by, for any $x\in [0,1)\setminus\mathbb{Q}$ and $\lambda\in\mathbb{N}$,
\begin{align}\label{eq:Ln-def}
    L_n(x,\lambda)
    \coloneqq\max_{0\leq j\leq n-1}\sum_{k=1}^{n-j}\prod_{i=1}^k\mathbf{1}_{a_{j+i}(x)=\lambda},
\end{align}
and the maximised longest-run function $R_n:[0,1)\setminus\mathbb{Q}\to\mathbb{N}$ by, for any $x\in[0,1)\setminus\mathbb{Q}$,
\begin{align}\label{eq:Rn-def}
    R_n(x)\coloneqq \max_{\lambda\in\mathbb N} L_n(x,\lambda).
\end{align}
In other words, for any $x\in[0,1)\setminus\mathbb{Q}$ and $\lambda,n\in\mathbb{N}$, $L_n(x,\lambda)$ is the maximum length of a consecutive run of the value $\lambda$ within the first $n$ partial quotients of $x$; and $R_n(x)$ is the maximum length of a consecutive run over all values. For example, one obtains $L_{30}(\pi-3,1)=3$ and $L_{30}(\pi-3,2)=R_{30}(\pi-3)=4$ by observing the first 30 partial quotients of $\pi-3$:
\begin{align*}
    \pi-3=[7,15,1,292,
    \dotuline{1},\dotuline{1},\dotuline{1},
    2,1,3,1,14,2,1,1,
    \dotuline{2},\dotuline{2},\dotuline{2},\dotuline{2},
    1,84,2,1,1,15,3,13,1,4,2,\ldots]
\end{align*}
where the underlined blocks show the longest runs.

These statistics are dynamical analogues of longest-run functionals for independent and identically distributed sequences. The key difference is that the process of partial quotients is not independent; nevertheless it is shift-invariant and strongly mixing under the Gauss measure, so that the setting falls within the broader mixing longest-run framework \cite{grill1987erdHos}. Two additional difficulties are imposed by the countable alphabet and by the maximisation over all digit values in \eqref{eq:Rn-def}.

Song and Zhou~\cite[Theorem~1.1]{SongZhou2020LongestBlock} established the first-order logarithmic law for the fixed-symbol longest-run function: for any $\lambda\in\mathbb{N}$ and almost every $x\in[0,1)\setminus\mathbb{Q}$,
\begin{align}\label{eq:first-order-L}
    \lim_{n\to\infty}\frac{L_n(x,\lambda)}{\log{n}/\log{\tau}}=\frac12,
    \qquad
    \tau\coloneqq\tau(\lambda)\coloneqq \frac{\lambda+\sqrt{\lambda^2+4}}{2}.
\end{align}
Wang and Wu~\cite{WangWu2011MaxRunLength} established the first-order behaviour of the maximised longest-run function: for almost every $x\in[0,1)\setminus\mathbb{Q}$,
\begin{align}\label{eq:first-order-R}
    \lim_{n\to\infty}\frac{R_n(x)}{\log{n}/\log{\varphi}}=\frac12,
    \qquad
    \varphi\coloneqq\frac{1+\sqrt5}{2}.
\end{align}
Song and Zhou also studied Hausdorff dimensions of the associated level and exceptional sets; some other related dimension results were obtained by Tan and Zhou \cite{TanZhou2025UniformDiophantineRunLength}.

The law in \eqref{eq:first-order-L} indicates that the maximised statistic in \eqref{eq:first-order-R} is governed by the most probable value, namely the positive integer 1 as partial quotients. The limits~\eqref{eq:first-order-R} and~\eqref{eq:first-order-L} identify the leading logarithmic scale of the longest-run statistics. However, corresponding convergence rates have not been quantified. An eventual two-sided additive refinement is established, sharpening both \eqref{eq:first-order-R} and \eqref{eq:first-order-L} by providing an explicit convergence rate.

\section{Main results}

A general theorem is first stated in order to isolate the probabilistic mechanism behind the eventual two-sided additive error bounds for longest runs. Quantitative mixing permits well-separated blocks to be treated as nearly independent, while exponential estimates for constant-word events calibrate the typical run length. Under these inputs, the longest-run length is shown to concentrate with a double-logarithmic additive error.

Let $(\Omega,\mathcal F,\mu,T)$ be a measure-preserving dynamical system. Let $\mathcal{A}$ be a countable set and $X:\Omega\to\mathcal{A}$ be a $\mathcal{F}$-measurable function. Let $(X_k)_{k\in\mathbb{N}}$ be the dynamical process generated by $X$; that is, for any $k\in\mathbb{N}$, $X_k:\Omega\to\mathcal{A}$ is a $\mathcal{F}$-measurable function and for any $\omega\in\Omega$, 
\begin{align*}
    X_k(\omega)\coloneqq\left(X\circ T^{k-1}\right)(\omega).
\end{align*}
Define, for any $n\in\mathbb{N}$, the fixed-symbol longest-run statistic $L_n:\Omega\times\mathcal{A}\to\mathbb{N}\cup\{0\}$ by, for any $\omega\in\Omega$ and $m\in\mathcal{A}$,
\begin{align*}
    L_n(\omega,m)
    \coloneqq\max_{0\leq j\leq n-1}\sum_{k=1}^{n-j}\prod_{i=1}^k\mathbf{1}_{X_{j+i}(\omega)=m};
\end{align*}
and the maximised longest-run statistic $R_n:\Omega\to\mathbb{N}$ by, for any $\omega\in\Omega$,  
\begin{align*}
    R_n(\omega)\coloneqq \max_{m\in\mathcal{A}} L_n(\omega,m).
\end{align*}
In other words, for any $\omega\in\Omega$ and $m\in\mathcal{A}$ and $n\in\mathbb{N}$, $L_n(\omega,m)$ is the maximum length of a consecutive run of the symbol $m$ within $X_1(\omega),\dots,X_n(\omega)$, and $R_n(\omega)$ is the maximum length of a consecutive run over all symbols. 

Three assumptions are imposed. Assumption~\ref{ass: 1} is a quantitative mixing condition with a gap. Assumption~\ref{ass: 2} fixes the exponential scale of constant-word events for a prescribed symbol. Assumption~\ref{ass: 3} supplies the corresponding uniform summed bound needed for the maximised statistic.

\begin{assumption}\label{ass: 1}
There exist constants $C_0>0$ and $\theta\in(0,1)$ such that for all $r,g\ge1$, all
events $A\in\sigma(X_1,\dots,X_r)$ and
$B\in\sigma(X_{r+g+1},X_{r+g+2},\dots)$,
\begin{align}\label{eq: ass 1}
    \left|\mu(A\cap B)-\mu(A)\mu(B)\right|
    \leq C_0\,\theta^{g}\,\mu(A)\mu(B).
\end{align}
\end{assumption}

Define, for any $m\in\mathcal{A}$ and $k\in\mathbb{N}$, the constant-word cylinder event:
\begin{align*}
    \Delta_k{\left(m\right)}
    \coloneqq \bigcap_{j=1}^k\left\{\omega\in\Omega: X_j(\omega)=m\right\}.
\end{align*}
Let $\rho>1$.

\begin{assumption}\label{ass: 2}
There exist $m_*\in\mathcal{A}$ and $c_-,c_+>0$ such that for any $k\in\mathbb{N}$,
\begin{align}\label{eq: ass 2}
    c_-\,\rho^{-2k}
    \leq \mu{\left(\Delta_k{\left({m_*}\right)}\right)}
    \leq c_+\,\rho^{-2k}.
\end{align}
\end{assumption}
\begin{assumption}\label{ass: 3}
There exists $C_1>0$ such that for any $k\in\mathbb{N}$,
\begin{align}\label{eq: ass 3}
    \sum_{m\in\mathcal{A}} \mu{\left(\Delta_k{\left({m}\right)}\right)}
    \leq C_1\,\rho^{-2k}.
\end{align}
\end{assumption}

\begin{thm}\label{thm: 1}
Suppose Assumption~\ref{ass: 1} holds.
\begin{enumerate}[leftmargin=*]
\item 
Suppose Assumption~\ref{ass: 2} holds.
Then for any $c>1/2$ and $\mu$-almost every $\omega\in\Omega$, there exists $n_{c,\omega}\in\mathbb{N}$ such that for any $n\in\mathbb{N}$, if $n\geq n_{c,\omega}$ then:
\begin{align*}
    \left|L_n(\omega,m_*)-\frac{\log n}{2\log\rho}\right|
    \leq \frac{c\log{\log{n}}}{\log{\rho}}.
\end{align*}
\item
Suppose both Assumptions~\ref{ass: 2} and~\ref{ass: 3} hold.
Then for any $c>1/2$ and $\mu$-almost every $\omega\in\Omega$, there exists $n_{c,\omega}\in\mathbb{N}$ such that for any $n\in\mathbb{N}$, if $n\geq n_{c,\omega}$ then:
\begin{align*}
    \left|R_n(\omega)-\frac{\log n}{2\log\rho}\right|\leq \frac{c\log{\log{n}}}{\log{\rho}}.
\end{align*}
\end{enumerate}
\end{thm}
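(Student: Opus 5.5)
The proof has an upper bound and a lower bound, each obtained by Borel--Cantelli along a geometric subsequence $n_j=\lfloor \beta^j\rfloor$ with $\beta>1$, followed by monotonicity in $n$. Write $\ell_n^{\pm}\coloneqq\frac{\log n}{2\log\rho}\pm\frac{c\log\log n}{\log\rho}$, so that $\rho^{2\ell_n^{\pm}}=n(\log n)^{\pm 2c}$. Since $R_n\ge L_n(\cdot,m_*)$ and $L_n(\cdot,m_*)$, $R_n$ are nondecreasing in $n$, it suffices to prove three things: an upper bound for $R_n$ under Assumptions~\ref{ass: 1}--\ref{ass: 3}, an upper bound for $L_n(\cdot,m_*)$ under Assumption~\ref{ass: 2}, and a lower bound for $L_n(\cdot,m_*)$ under Assumptions~\ref{ass: 1} and~\ref{ass: 2}. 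Part (2) then follows from part (1) together with the upper bound for $R_n$. The slack $c>1/2$ absorbs the factor $\beta$ when we interpolate between $n_j$ and $n_{j+1}$, because $\ell^{\pm}_{n_{j+1}}-\ell^{\pm}_{n_j}=O(1)$ while $\log\log n\to\infty$. For the interpolation we use a slightly smaller $c'\in(1/2,c)$ on the subsequence.

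\textbf{Upper bound.} Set $k=\lceil \ell^+_{n}\rceil$. A run of length at least $k$ of some symbol inside $X_1,\dots,X_n$ starts at some position $j\le n$. By shift invariance, the event that it starts at $j$ is $T^{-(j-1)}\bigcup_m\Delta_k(m)$. Hence Assumption~\ref{ass: 3} gives
\[
\mu(R_n\ge k)\le n\sum_m\mu(\Delta_k(m))\le C_1 n\rho^{-2k}\le C_1(\log n)^{-2c}.
\]
Along $n_j=\beta^j$ the bound is $\asymp j^{-2c}$, which is summable since $2c>1$. Borel--Cantelli and monotonicity then give $R_n\le \ell_n^+$ eventually, with $c'$ used on the subsequence. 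The same argument, using only the upper bound in Assumption~\ref{ass: 2}, handles $L_n(\cdot,m_*)$ without Assumption~\ref{ass: 3}. No mixing is needed here.

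\textbf{Lower bound.} This is the main step. Set $k=\lfloor\ell_n^-\rfloor$ and choose a gap $g=\lceil K\log\log n\rceil$ with $K$ large. Partition $\{1,\dots,n\}$ into $N\asymp n/(k+g)$ consecutive blocks of length $k+g$. Let $A_i$ be the event that the first $k$ letters of block $i$ all equal $m_*$; each $A_i$ is a shifted copy of $\Delta_k(m_*)$. The event $\{L_n(\cdot,m_*)<k\}$ is contained in $\bigcap_i A_i^c$. We iterate Assumption~\ref{ass: 1}: the event $\bigcap_{i<N}A_i^c$ lies in $\sigma(X_1,\dots,X_r)$, and $A_N^c$ lies in the $\sigma$-algebra of the coordinates after gap $g$, since shift invariance lets us apply the assumption to shifted blocks. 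This yields
\[
\mu\Big(\bigcap_i A_i^c\Big)\le (1+C_0\theta^g)^{N}\prod_i(1-\mu(A_i))\le (1+C_0\theta^g)^N\exp(-Nc_-\rho^{-2k}).
\]
By the lower bound in Assumption~\ref{ass: 2}, $N\rho^{-2k}\gtrsim \frac{n}{k+g}\cdot\frac{(\log n)^{2c}}{n}\asymp(\log n)^{2c-1}$, using $k+g\asymp\log n$. The factor $(1+C_0\theta^g)^N\le\exp(C_0N\theta^g)$ is bounded once $\theta^{g}\le n^{-1}$, which requires $g\asymp\log n$ rather than $\log\log n$. That is fine: with $g=\lceil\log n/\log(1/\theta)\rceil$ we still have $k+g\asymp\log n$. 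The probability is therefore at most $C\exp(-c''(\log n)^{2c-1})$, which is summable along $n_j=\beta^j$ because $2c-1>0$. Indeed it is even summable over all $n$ once we note that $\exp(-(\log n)^{\delta})$ decays faster than any power of $1/\log n$, though the subsequence suffices. Borel--Cantelli and monotonicity then give $L_n(\cdot,m_*)\ge\ell_n^-$ eventually, and hence $R_n\ge\ell_n^-$.

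The main obstacle is the careful iteration of the multiplicative mixing bound. Assumption~\ref{ass: 1} must be applied to $\bigcap_{i<N}A_i^c$, whose measure may be tiny; this is allowed because the bound is relative, of the form $C_0\theta^g\mu(A)\mu(B)$. The same relative form is what produces the clean product factor $(1+C_0\theta^g)^N$, so the bookkeeping of $g$ versus $k$ is the only delicate point.
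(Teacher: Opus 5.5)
Your proposal is correct and follows essentially the paper's route. The upper bound is a union bound plus Borel--Cantelli along a geometric subsequence. The lower bound uses $\asymp n/\log n$ blocks separated by gaps $g\asymp\log n$ with the relative mixing bound iterated, followed by monotone interpolation with some $c'\in(1/2,c)$. The one difference is the direction of the mixing bound: you apply it to the complements and pick up $(1+C_0\theta^g)^N$, whereas the paper applies it to the run events themselves to get a factor $1-p_j/2$ per step.

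One aside is false, though unused: $\exp(-c''(\log n)^{2c-1})$ is not summable over all $n$ when $2c-1<1$, since it decays more slowly than any power of $n$. So the restriction to the geometric subsequence is genuinely needed.
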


Theorem~\ref{thm: 2} establishes the quantitative error bounds for the longest runs of partial quotients, strengthening the first-order ratio laws of Song--Zhou in~\eqref{eq:first-order-L} and Wang and Wu in~\eqref{eq:first-order-R} to an eventual two-sided additive refinement with explicit double-logarithmic additive error terms in both the fixed-value and maximised longest-run bounds.

\begin{thm}\label{thm: 2}
For any $c>1/2$ and almost every $x\in[0,1)\setminus\mathbb{Q}$, there exists $n_{c,x}\in\mathbb N$ such that for any $n\in\mathbb{N}$, if $n\geq n_{c,x}$ then:
\begin{align}\label{eq: main-L}
    \left|L_n(x,\lambda)-\frac{\log{n}}{2\log{\tau}}\right|
    \leq \frac{c\log{\log{n}}}{\log\tau},
\end{align}
where \(\tau=\tau(\lambda)\) is given in~\eqref{eq:first-order-L}, and:
\begin{align}\label{eq: main-R}
    \left|R_n(x)-\frac{\log{n}}{2\log{\varphi}}\right|
    \leq \frac{c\log{\log{n}}}{\log\varphi};
\end{align}
where $\varphi$ is given in~\eqref{eq:first-order-R}.
\end{thm}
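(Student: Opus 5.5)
Theorem~\ref{thm: 2} follows from Theorem~\ref{thm: 1} once its assumptions are verified for the Gauss system. Take $\Omega=[0,1)\setminus\mathbb{Q}$, $T$ the Gauss map $Tx=1/x-\lfloor 1/x\rfloor$, $\mu$ the Gauss measure $d\mu=\frac{dx}{(1+x)\log 2}$, $\mathcal{A}=\mathbb{N}$ and $X=a_1$, so that $X_k=a_k$. Since $\mu$ and Lebesgue measure are mutually absolutely continuous with densities bounded between $1/(2\log 2)$ and $1/\log 2$, ``$\mu$-almost every'' and ``almost every'' coincide. The plan is to check Assumptions~\ref{ass: 1}--\ref{ass: 3}: Assumption~\ref{ass: 1} once; Assumption~\ref{ass: 2} with $m_*=\lambda$ and $\rho=\sqrt{\tau(\lambda)}$, giving \eqref{eq: main-L}; and Assumption~\ref{ass: 3} with $\rho=\sqrt{\varphi}$, $m_*=1$, giving \eqref{eq: main-R}. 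With these choices $\log n/(2\log\rho)=\log n/\log\tau$ and $c\log\log n/\log\rho=2c\log\log n/\log\tau$. The normalisations therefore need to be matched with care against \eqref{eq:first-order-L}: by \eqref{eq:first-order-L} the centring must be $\log n/(2\log\tau)$, which forces $\rho^{2}=\tau^{2}$, i.e.\ $\rho=\tau$. The cylinder computation below confirms this.

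\textbf{Assumption~\ref{ass: 1}.} This is the classical $\psi$-mixing property of the Gauss map with exponential rate: for $A\in\sigma(a_1,\dots,a_r)$ and $B\in\sigma(a_{r+g+1},\dots)$,
\[
|\mu(A\cap B)-\mu(A)\mu(B)|\le C_0\theta^{g}\mu(A)\mu(B).
\]
The standard route is via the Gauss--Kuzmin--Lévy theorem (Wirsing's bound). For a rank-$r$ cylinder $I$, the conditional density of $T^{r}$ restricted to $I$ is $\frac{q_r(q_r+q_{r-1})}{(q_rx+ \dots)^2}$-type, and it is uniformly Lipschitz. Applying the transfer operator $g$ more times contracts it to the Gauss density at rate $\theta^{g}$, uniformly in $I$. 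Summing over the cylinders making up $A$ and integrating over $B$ gives the bound. This can be cited, e.g.\ from Iosifescu--Kraaikamp, Theorem~1.3.x; it is a standard input rather than something to reprove.

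\textbf{Assumption~\ref{ass: 2}.} The cylinder $\Delta_k(\lambda)$ consists of $x$ with $a_1=\dots=a_k=\lambda$. Its Lebesgue length is $\frac{1}{q_k(q_k+q_{k-1})}$, where $q_k$ satisfies $q_k=\lambda q_{k-1}+q_{k-2}$, $q_0=1$, $q_{-1}=0$. Hence $q_k\asymp\tau^{k}$ with constants depending on $\lambda$, since $\tau$ is the dominant root of $t^2=\lambda t+1$ and the other root has modulus less than $1$. Thus $|\Delta_k(\lambda)|\asymp\tau^{-2k}$, and the Gauss measure is comparable to Lebesgue measure, so Assumption~\ref{ass: 2} holds with $\rho=\tau$ and $m_*=\lambda$. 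Theorem~\ref{thm: 1}(1) then gives \eqref{eq: main-L} exactly, since $\log n/(2\log\rho)=\log n/(2\log\tau)$ and the error is $c\log\log n/\log\tau$. For $\lambda=1$ one gets $\tau(1)=\varphi$, which supplies Assumption~\ref{ass: 2} for the maximised statistic with $m_*=1$ and $\rho=\varphi$.

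\textbf{Assumption~\ref{ass: 3}: the main obstacle.} We need $\sum_{\lambda\ge1}\mu(\Delta_k(\lambda))\le C_1\varphi^{-2k}$ uniformly in $k$. Since $\mu\le|\cdot|/\log 2$, it suffices to bound Lebesgue lengths. Write $q_k(\lambda)$ for the denominator above; then $|\Delta_k(\lambda)|\le q_k(\lambda)^{-2}$. I would prove by induction that $q_k(\lambda)\ge \lambda\,\tau(\lambda)^{k-1}\cdot c$ with an absolute $c>0$, or more simply $q_k(\lambda)\ge \lambda F_k$, with $F_k$ the Fibonacci numbers. Indeed $q_1=\lambda$, $q_2=\lambda^2+1\ge\lambda F_2$, and the recursion $q_k=\lambda q_{k-1}+q_{k-2}\ge q_{k-1}+q_{k-2}$ preserves the inequality. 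Hence
\[
\sum_\lambda|\Delta_k(\lambda)|\le F_k^{-2}\sum_\lambda\lambda^{-2}\le C\varphi^{-2k},
\]
using $F_k\asymp\varphi^{k}$. This gives Assumption~\ref{ass: 3} with $\rho=\varphi$, matching $m_*=1$, and Theorem~\ref{thm: 1}(2) yields \eqref{eq: main-R}. The only care needed is uniformity in $\lambda$ of the constants, which the Fibonacci comparison handles directly. Finally, intersecting the countably many full-measure sets (one for each $\lambda$, and one for $R_n$) and taking $c$ along a sequence decreasing to $1/2$ gives a single full-measure set, completing the proof.
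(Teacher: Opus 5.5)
Your proposal is correct and follows the paper's route. You verify Assumptions 1--3 for the Gauss system and apply Theorem 1: exponential $\psi$-mixing by citation, $q_k\asymp\tau^k$ for the constant word giving $\rho=\tau$, and a summed cylinder bound giving $\rho=\varphi$ with $m_*=1$. Your uniform estimate $q_k(\lambda)\ge\lambda F_k$ for Assumption 3 is a slightly cleaner alternative to the paper's split into $\lambda=1$ (density bound $2/3$) and $\lambda\ge2$ (via $q_k\ge\lambda^k$); it costs only a worse constant $C_1$, which is irrelevant for Theorem 1.

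Do delete your opening assertion $\rho=\sqrt{\tau(\lambda)}$, $\rho=\sqrt{\varphi}$. It contradicts the choice $\rho=\tau$, $\rho=\varphi$ that you actually use later, which is the one forced by the $\rho^{-2k}$ normalisation in Assumption 2.
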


In Theorem~\ref{thm: 2}, ``almost every'' is with respect to the Gauss measure. Since the Gauss measure and Lebesgue measure are mutually absolutely continuous on the unit interval, they have the same null sets; hence, the statement holds Lebesgue-almost everywhere as well.

The results are obtained by verifying Assumptions~\ref{ass: 1},~\ref{ass: 2}, and~\ref{ass: 3} for the partial-quotient process under the Gauss measure and then applying Theorem~\ref{thm: 1}. Concretely, quantitative mixing together with the two-sided exponential estimate for the constant-word event of a prescribed digit yields~\eqref{eq: main-L}. Moreover, the corresponding summed exponential bound over all digits also holds under the Gauss measure; hence, the maximised bound~\eqref{eq: main-R} follows.

\section{Proof of Theorem~\ref{thm: 1}}

\begin{proof}
Define, for any $n\in\mathbb{N}$ and $\rho>1$, 
\begin{align*}
    b_{n,\rho}\coloneqq \frac{\log n}{2\log\rho}.
\end{align*}
Pick any $c>1/2$ and $c_1\in(1/2,c)$. Define, for any $j\in\mathbb{N}$,
\begin{align*}
    n_j \coloneqq 2^j,
    &&
    k^-_j \coloneqq \left \lfloor b_{n_j,\rho}-\frac{c_1\log{j}}{\log{\rho}}\right\rfloor,
    &&
    k^+_j \coloneqq \left \lceil b_{n_j,\rho}+\frac{c_1\log{j}}{\log{\rho}}\right\rceil.
\end{align*}
Note that for any $j\in\mathbb{N}$, 
\begin{align}\label{eq:rho-bn}
    \rho^{-2b_{n_j}(\rho)}
    =\frac{1}{n_j}.
\end{align}

One establishes the upper bounds by applying the estimate assumptions and the Borel--Cantelli Lemma. Pick any $\omega\in\Omega$ and $m_*\in\mathcal{A}$. For any $j\in\mathbb{N}$, if $L_{n_j}(\omega,m_*)\geq k^+_j$ then there exists $t\in\{0,1,...n_j-k^+_j\}$ such that $T^t\omega\in\Delta_{k^+_j}{\left({m_*}\right)}$. By a union bound and $T$-invariance,
\begin{align*}
    \mu{\left(\left\{\omega\in\Omega:L_{n_j}(\omega,m_*)\geq k^+_j\right\}\right)}
    \leq \sum_{t=0}^{n_j-k^+_j}\mu{\left (T^{-t}\Delta_{k^+_j}{\left({m_*}\right)}\right)}
    \leq n_j\,\mu{\left(\Delta_{k^+_j}{\left({m_*}\right)}\right)}.
\end{align*}
By~\eqref{eq: ass 2} and~\eqref{eq:rho-bn}, for any $j\in\mathbb{N}$,
\begin{align*}
    n_j\,\mu{\left(\Delta_{k^+_j}{\left({m_*}\right)}\right)}
    \leq c_+\,n_j\,\rho^{-2k^+_j}
    \leq c_+\,n_j\,\rho^{-2b_{n_j,\rho}}\rho^{-2c_1\log{j}/\log{\rho}}
    = c_+\,j^{-2c_1}.
\end{align*}
Since $2c_1>1$ and $\sum_{j\in\mathbb{N}}j^{-2c_1}<+\infty$, the Borel--Cantelli Lemma implies that for $\mu$-almost every $\omega\in\Omega$, there exists $j_\omega\in\mathbb{N}$ such that for any $j\in\mathbb{N}$, if $j\geq j_\omega$ then:
\begin{align*}
     L_{n_j}(\omega,m_*)<k^+_j.
\end{align*}
For any $j\in\mathbb{N}$, if $R_{n_j}(\omega)\geq k^+_j$, then there exist $m\in\mathcal{A}$ and $t\in\{0,1,...n_j-k^+_j\}$ such that $T^t\omega\in\Delta_{k^+_j}{\left({m}\right)}$. By a union bound, the $T$-invariance,~\eqref{eq: ass 3} and~\eqref{eq:rho-bn}, for any $j\in\mathbb{N}$,
\begin{align*}
    \mu{\left(\left\{\omega\in\Omega:R_{n_j}(\omega)\geq k^+_j\right\}\right)}
    \leq n_j\sum_{m\in\mathcal{A}}\mu{\left(\Delta_{k^+_j}{\left(m\right)}\right)}
    \leq C_1\,n_j\,\rho^{-2k^+_j}
    \leq j^{-2c_1},
\end{align*}
Since $2c_1>1$ and $\sum_{j\in\mathbb{N}}j^{-2c_1}<+\infty$, the Borel--Cantelli Lemma implies that for $\mu$-almost every $\omega\in\Omega$, there exists $j_\omega\in\mathbb{N}$ such that for any $j\in\mathbb{N}$, if $j\geq j_\omega$ then:
\begin{align*}
     R_{n_j}(\omega)<k^+_j.
\end{align*}
For any $n\in\mathbb{N}$ and $j=\lfloor \log{n}/\log{2} \rfloor$, one obtains:
\begin{align*}
    b_{2^{j+1},\rho}=\frac{(j+1)\log2}{2\log\rho}
    \leq b_{n,\rho}+\frac{\log{2}}{2\log{\rho}},
    &&
    \frac{\log{(j+1)}}{\log{\rho}}\leq\frac{\log{\log{n}}}{\log{\rho}}+\frac{\log{2}-\log{\log{2}}}{\log{\rho}}.
\end{align*}
Therefore, for $\mu$-almost every $\omega\in\Omega$, there exists $j_\omega\in\mathbb{N}$ such that for any $n\in\mathbb{N}$, if $\lfloor \log{n}/\log{2} \rfloor\geq j_\omega$ then for the fixed-symbol longest-run statistic:
\begin{align}\label{eq: abstract-upper-L}
    L_n(\omega,m_*)
    &\leq L_{2^{j+1}}(\omega,m_*) <k^+_{j+1} \nonumber \\
    &\leq b_{n,\rho}+\frac{c_1\log{\log{n}}}{\log{\rho}}+1+\frac{\log{2}}{2\log{\rho}}+\frac{(\log{2}-\log{\log{2}})c_1}{\log{\rho}},
\end{align}
and for the maximised longest-run statistic:
\begin{align}\label{eq: abstract-upper-R}
    R_n(\omega)
    &\leq R_{2^{j+1}}(\omega) <k^+_{j+1} \nonumber \\
    &\leq b_{n,\rho}+\frac{c_1\log{\log{n}}}{\log{\rho}}+1+\frac{\log{2}}{2\log{\rho}}+\frac{(\log{2}-\log{\log{2}})c_1}{\log{\rho}}.
\end{align}

One establishes the lower bounds by applying separated trials, the mixing assumption, and the Borel--Cantelli Lemma.
By~\eqref{eq: ass 2} and~\eqref{eq:rho-bn}, for any $j\in\mathbb{N}$,
\begin{align*}
    p_j\coloneqq \mu{\left(\Delta_{k^-_j}{\left({m_*}\right)}\right)}\geq \frac{c_-}{n_j}j^{2c_1}.
\end{align*}
Define the gaps $(g_j)_{j\in\mathbb{N}}$ by, for any $j\in\mathbb{N}$,
\begin{align*}
    g_j \coloneqq \left\lceil -\frac{\log n_j}{\log{\theta}}\right\rceil,
\end{align*}
Note that for any $j\in\mathbb{N}$, if $j\geq\lceil{1+\log{C_0}/\log{2}\rceil}$ then:
\begin{align}\label{eq:gap-choice}
    C_0\,\theta^{g_j}\leq \frac12.
\end{align}
Define, for any $j\in\mathbb{N}$, $\ell_j\coloneqq k^-_j+g_j$, $M_j\coloneqq \lfloor{n_j}/{\ell_j}\rfloor$, and for any $r\in\{0,1,\dots,M_j-1\}$, $t_r\coloneqq r\ell_j$ and:
\begin{align*}
    B_{r,j}
    \coloneqq
    \left\{\omega\in\Omega:X_{t_r+1}(\omega)=\cdots=X_{t_r+k^-_j}(\omega)=m_*\right\}
    =T^{-t_r}\Delta_{k^-_j}{\left({m_*}\right)},
    &&
    N_j
    \coloneqq
    \sum_{r=0}^{M_j-1}\mathbf 1_{B_{r,j}}.
\end{align*}
Note that for any $j\in\mathbb{N}$,
\begin{align*}
    M_j
    \geq\frac{n_j}{\ell_j+1}
    \geq\left(\frac{1}{2\log{\rho}}+\frac{2}{\log{2}}-\frac{1}{\log{\theta}}\right)^{-1}\frac{n_j}{j\log{2}}.
\end{align*}
Since $\mu$ is $T$-invariant, one obtains from the definition of $(N_j)_{j\in\mathbb{N}}$ that for any $j\in\mathbb{N}$,
\begin{align}
\label{eq: ENj}
    \mathbb{E}[N_j]=M_jp_j
    \geq \left(\frac{1}{2\log{\rho}}+\frac{2}{\log{2}}-\frac{1}{\log{\theta}}\right)^{-1}\frac{c_-}{\log{2}}j^{2c_1-1}.
\end{align}
Define, for any $j\in\mathbb{N}$ and $r\in\{0,1,\dots,M_j-1\}$,
\begin{align*}
    A_{r,j}\coloneqq \Omega\setminus \bigcup_{s=0}^{r-1}B_{s,j}.
\end{align*}
By construction, for any $j\in\mathbb{N}$ and $r\in\{0,1,\dots,M_j-1\}$, one obtains two $\sigma$-algebras, namely $A_{r,j}\in\sigma(X_1,\dots,X_{t_r-g_j})$ and $B_{r,j}\in\sigma(X_{t_r+1},X_{t_r+2},\dots)$; while they are separated by a gap $g_j$. By applying~\eqref{eq: ass 1}, one obtains for any $j\in\mathbb{N}$ and $r\in\{0,1,\dots,M_j-1\}$,
\begin{align*}
    \mu{\left(A_{r,j}\cap B_{r,j}\right)}
    \geq \left(1-C_0\theta^{g_j}\right)\,\mu{(A_{r,j})}\,\mu{(B_{r,j})}
    =\left(1-C_0\theta^{g_j}\right)\,\mu{(A_{r,j})}\,p_j.
\end{align*}
By~\eqref{eq:gap-choice}, for any $j\in\mathbb{N}$ and $r\in\{0,1,\dots,M_j-1\}$, if $j\geq\lceil{1+\log{C_0}/\log{2}\rceil}$ then:
\begin{align*}
    \frac{\mu{\left(A_{r,j}\cap B_{r,j}\right)}}{\mu{\left(A_{r,j}\right)}}
    \geq \left(1-C_0\theta^{g_j}\right)p_j\geq \frac{p_j}{2};
\end{align*}
hence, 
\begin{align*}
    \mu{\left(A_{r+1,j}\right)}
    =\mu{\left(A_{r,j}\setminus B_{r,j}\right)}
    =\mu{\left(A_{r,j}\right)}\left(1-\frac{\mu{\left(A_{r,j}\cap B_{r,j}\right)}}{\mu{\left(A_{r,j}\right)}}\right)
    \leq \mu{\left(A_{r,j}\right)}\left(1-\frac{p_j}{2}\right).
\end{align*}
Note that for any $j\in\mathbb{N}$, $A_{0,j}=\Omega$, $A_{M_j,j}={N_j}^{-1}(\{0\})$, and:
\begin{align*}
    \mu{\left({N_j}^{-1}(\{0\})\right)}
    =\mu{\left(A_{M_j,j}\right)}
    \leq \left(1-\frac{p_j}{2}\right)^{M_j}
    \leq \exp{-\frac{M_jp_j}{2}}
    = \exp{-\frac{\mathbb{E}[N_j]}{2}}
\end{align*}
By~\eqref{eq: ENj} and $2c_1-1>0$, one obtains $\lim_{j\to+\infty}\mathbb{E}[N_j]/j^{2c_1-1}>0$ and $\sum_{j\in\mathbb{N}} \mu{({N_j}^{-1}(\{0\}))}<+\infty$. By the Borel--Cantelli Lemma, for $\mu$-almost every $\omega\in\Omega$, there exists $j_\omega\in\mathbb{N}$ such that for any $j\in\mathbb{N}$, if $j\geq j_\omega$ then $N_j(\omega)\geq 1$ and equivalently: 
\begin{align*}
    L_{n_j}(\omega,m_*)\geq k^-_j.
\end{align*}
For any $n\in\mathbb{N}$ and $j=\lfloor \log{n}/\log{2} \rfloor$, one obtains: $L_n(m_*)\geq L_{2^j}(m_*)\geq k^-_j$. For $\mu$-almost every $\omega\in\Omega$ and $n\in\mathbb{N}$, if $\lfloor \log{n}/\log{2} \rfloor\geq\max{\{j_\omega,\lceil{1+\log{C_0}/\log{2}\rceil}\}}$ then:
\begin{align}\label{eq:abstract-lower}
    R_n(\omega)
    \geq L_n(\omega,m_*)
    \geq b_{n,\rho}-\frac{c_1\log{\log{n}}}{\log{\rho}}-\left(1+\frac{\log{2}}{2\log{\rho}}-\frac{c_1\log{\log{2}}}{\log{\rho}}\right).
\end{align}

By combining~\eqref{eq: abstract-upper-L} and~\eqref{eq:abstract-lower}, one obtains that for $\mu$-almost every $\omega\in\Omega$, there exist $C_\omega>0$ and $n_\omega\in\mathbb{N}$ such that for any $n\in\mathbb{N}$, if $n\geq n_\omega$ then:
\begin{align*}
    \left |L_n(\omega,m_*)-b_{n,\rho}\right|
    \leq \frac{c_1\log{\log{n}}}{\log{\rho}} + C_\omega.
\end{align*}
Hence, \eqref{eq: main-L} is obtained for the fixed-symbol longest-run statistic by taking:
\begin{align*}
    n_{c,\omega}\coloneqq\max{\left\{n_\omega,
    \exp{\exp{\frac{C_\omega\log{\rho}}{c-c_1}}}\right\}},
\end{align*}
By combining~\eqref{eq: abstract-upper-R} and~\eqref{eq:abstract-lower},~\eqref{eq: main-R} is obtained for the maximised longest-run statistic.
\end{proof}

\section{Proof of Theorem~\ref{thm: 2}}

Theorem~\ref{thm: 2} is obtained by verifying the cylinder-measure inputs required by the general template in Theorem~\ref{thm: 1}. Assumption~\ref{ass: 1} is taken as given for the Gauss system. The remaining task is to verify Assumptions~\ref{ass: 2} and~\ref{ass: 3} via exponential estimates for constant-word cylinders, both for a fixed digit value and after summing over all digit values.

Let $\Omega=[0,1)\setminus\mathbb{Q}$ and $\mathcal{F}$ be the Borel $\sigma$-algebra relative to $\Omega$. Let $T:\Omega\to\Omega$ be the Gauss map on $\Omega$; that is for any $x\in\Omega$,
\begin{align*}
    T(x)\coloneqq\frac{1}{x}-\left\lfloor\frac{1}{x}\right\rfloor.
\end{align*}
Let $\mu:\mathcal{F}\to[0,1]$ be the Gauss measure on $(\Omega,\mathcal{F})$; that is for any $A\in\mathcal{F}$,
\begin{align}\label{eq: 16}
    \mu{(A)}\coloneqq\frac{1}{\log 2}\int_A\frac{\mathrm{d}x}{1+x},
\end{align}
The Gauss measure $\mu$ is invariant under the gauss map$T$ \cite[\S 15]{Khinchin1964ContinuedFractions}. The partial-quotient process $(a_n)_{n\in\mathbb{N}}$ on $\Omega$ is a dynamical process; that is, for any $n\in\mathbb{N}$ and $x\in\Omega$, 
\begin{align*}
    a_n(x)
    \coloneqq \left\lfloor \frac{1}{T^{\,n-1}(x)}\right\rfloor.
\end{align*}

Lemma~\ref{lem: assumption 1} below verifies Assumption~\ref{ass: 1}. 
This is the only place where the dependence structure of the partial quotients is considered.
\begin{lemma}\label{lem: assumption 1}
Let $\mu$ be the Gauss measure on $\Omega$, and for any $n\in\mathbb{N}$, $a_n:\Omega\to\mathbb{N}$ be the $n$-th partial quotient.
There exist $C_0>0$ and $0<\theta<1$ such that for any $r,g\in\mathbb{N}$ and events:
\begin{align*}
    A\in\mathcal{F}_{1,r}\coloneqq\sigma(a_1,\dots,a_r), && B\in\mathcal{F}_{r+g+1,+\infty}\coloneqq\sigma(a_{r+g+1},a_{r+g+2},\dots),
\end{align*}
one has:
\begin{align*}
    |\mu(A\cap B)-\mu(A)\mu(B)|
    \leq C_0\,\theta^{g}\,\mu(A)\mu(B).
\end{align*}
\end{lemma}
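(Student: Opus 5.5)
We propose to prove Lemma~\ref{lem: assumption 1} via the classical Kuzmin--Lévy--Wirsing estimate for the Gauss map. We reduce to cylinders in $A$, then use the Gauss transfer operator to control the conditional law of $T^{r+g}x$, and finally extend from cylinders to general sets.

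\emph{Step 1 (reduction to cylinders).} Every $A\in\mathcal F_{1,r}$ is a countable disjoint union of rank-$r$ cylinders
\[
I(i_1,\dots,i_r)=\{x: a_1=i_1,\dots,a_r=i_r\}.
\]
Every $B\in\mathcal F_{r+g+1,+\infty}$ has the form $B=T^{-(r+g)}E$ with $E$ Borel. By $T$-invariance, $\mu(B)=\mu(E)$. Since the inequality is additive in $A$ (both sides are), it suffices to prove, for each single rank-$r$ cylinder $I$,
\[
\bigl|\mu(I\cap T^{-r-g}E)-\mu(I)\mu(E)\bigr|\le C_0\theta^g\mu(I)\mu(E).
\]

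\emph{Step 2 (conditional density after $r$ steps).} On $I$, the map $T^r$ is a Möbius bijection onto $[0,1)\setminus\mathbb Q$, with inverse $\psi(y)=(p_r+p_{r-1}y)/(q_r+q_{r-1}y)$. Hence the normalized push-forward of $\mu|_I$ under $T^r$ has density with respect to $\mu$ equal to
\[
h_I(y)=\frac{|\psi'(y)|(1+\psi(y))}{\mu(I)(1+y)}\cdot\frac{1}{\log 2}\cdot(\text{normalizing}).
\]
The key point is the standard computation that $h_I$ is a ratio of functions of the form $(q_r+q_{r-1}y)^{-2}$ and linear factors. Consequently $h_I$ is bounded above and below by absolute constants, and $\log h_I$ has derivative bounded uniformly in $r$ and $I$. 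Thus $h_I$ lies in a fixed class $\mathcal C$ of probability densities that is uniformly Lipschitz and uniformly bounded away from $0$.

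\emph{Step 3 (exponential convergence).} Write $\mu(I\cap T^{-r-g}E)=\mu(I)\int_E P^g h_I\,d\mu$, where $P$ is the Perron--Frobenius operator of $T$ with respect to $\mu$. The Kuzmin--Lévy/Wirsing theorem (or the spectral gap of $P$ on Lipschitz functions, as in Iosifescu--Kraaikamp) gives
\[
\|P^g h-1\|_\infty\le C\theta^g\|h\|_{\mathrm{Lip}}
\]
for some $\theta<1$ (Wirsing's constant $\approx0.3036$). Integrating over $E$ then yields the cylinder inequality with $C_0=C\sup_{h\in\mathcal C}\|h\|_{\mathrm{Lip}}$, and Step 1 completes the proof. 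Equivalently, one may cite directly the known $\psi$-mixing of the continued-fraction digit process with exponential rate (Philipp; Iosifescu--Kraaikamp, \emph{Metrical Theory of Continued Fractions}, Thm.~2.3.x). That result is exactly this statement, since $\psi$-mixing coefficients control $|\mu(A\cap B)/(\mu(A)\mu(B))-1|$.

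The main obstacle is Step 2: proving that the densities $h_I$ have Lipschitz norms bounded \emph{uniformly} in $r$ and in the cylinder. This uniformity is what turns mere strong mixing into the multiplicative $\psi$-mixing form required here. We would handle it by the explicit Möbius form, using $0\le q_{r-1}/q_r\le1$, which confines all parameters to a compact set. If needed, we would simply quote the classical $\psi$-mixing theorem instead.
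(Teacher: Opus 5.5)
Your proof is correct, and your fallback (quoting exponential $\psi$-mixing of the digit process) is the paper's entire proof. The paper defines $\psi_\xi(g)$ as the supremum of $|\mu(A\cap B)/(\mu(A)\mu(B))-1|$ over $A\in\mathcal{F}_{1,r}$, $B\in\mathcal{F}_{r+g+1,+\infty}$, and cites Schindler--Zweim\"uller for $\psi_\xi(g)\le C_0\theta^g$.

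Your main line instead \emph{derives} that estimate, so it is a genuinely more self-contained route:
\begin{itemize}
\item you reduce to rank-$r$ cylinders (strictly, the signed quantity $\mu(A\cap B)-\mu(A)\mu(B)$ is countably additive in $A$, and you then apply the triangle inequality, since the absolute value itself is not additive);
\item you write $B=T^{-(r+g)}E$;
\item you show the conditional densities $h_I$ form a family bounded above and below and uniformly Lipschitz;
\item you apply the sup-norm Kuzmin--L\'evy--Wirsing (spectral-gap) bound $\|P^gh-1\|_\infty\le C\theta^g\|h\|_{\mathrm{Lip}}$.
\end{itemize}
This costs some bounded-distortion bookkeeping. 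In return it shows why the error is multiplicative in $\mu(A)\mu(B)$: convergence of $P^gh_I$ to $1$ in $L^\infty$, uniformly over the family $\{h_I\}$, is what lets you integrate over an arbitrary $E$ and pick up the factor $\mu(E)$. It also rests on classical Gauss--Kuzmin theory rather than a packaged mixing theorem. The paper's route is shorter but relies entirely on the citation.

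One slip in Step 2 needs fixing. The normalised density is
\[
h_I(y)=\frac{(1+y)\,|\psi'(y)|}{\mu(I)\,(1+\psi(y))}
=\frac{1+y}{\mu(I)\,(q_r+q_{r-1}y)\bigl((q_r+p_r)+(q_{r-1}+p_{r-1})y\bigr)}.
\]
So the ratio is $(1+y)/(1+\psi(y))$, not its inverse, and the $1/\log 2$ factors cancel. Since $1+y$ and $1+\psi(y)$ both lie in $[1,2]$, this does not affect your uniform bounds on $h_I$ or on $(\log h_I)'$.
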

\begin{proof}
Let $\xi\coloneqq (I_k)_{k\in\mathbb{N}}$ be the standard partial quotient partition of $\Omega$ on the first level, that is for any $k\in\mathbb{N}$, $I_k\coloneqq\{x\in\Omega:a_1(x)=k\}$. Define the $\psi$-mixing coefficient $\psi_\xi:\mathbb{N}\to\mathbb{R}^+$ with respect to $\xi$ by, for any $g\in\mathbb{N}$,
\begin{align*}
    \psi_\xi{(g)}
    \coloneqq
    \sup_{r\in\mathbb{N}}\ 
    \sup_{\substack{A\in\mathcal F_{1,r},\ B\in\mathcal F_{r+g+1,+\infty}\\ \mu(A)\mu(B)>0}}
    \left|\frac{\mu(A\cap B)}{\mu(A)\mu(B)}-1\right|.
\end{align*}
By definition, for any $r\in\mathbb{N}$, $A\in\mathcal{F}_{1,r}$ and $B\in\mathcal{F}_{r+g+1,+\infty}$, if $\mu(A)\mu(B)>0$ then:
\begin{align}\label{eq:psi-mixing-assump1}
    \left|\frac{\mu(A\cap B)}{\mu(A)\mu(B)}-1\right| \leq \psi_\xi{(g)}.
\end{align}
By \cite[\S 4]{SchindlerZweimueller2023PrimeDigitsCF}, the Gauss system $(\Omega,\mathcal{F},\mu,T)$ is exponentially \(\psi\)-mixing with respect to $\xi$. Hence, there exist $C_0>0$ and $0<\theta<1$ such that for any $g\in\mathbb{N}$,
\begin{align}
\label{eq: CF}
    \psi_\xi(g)\le C_0\,\theta^{g}.
\end{align}
By combining~\eqref{eq:psi-mixing-assump1} and~\eqref{eq: CF}, Lemma~\ref{lem: assumption 1} is proved.
\end{proof}

Define, for any finite word $(d_1,\dots,d_k)\in\mathbb{N}^k$, the rank-$k$ cylinder set:
\begin{align*}
    \Delta{(d_1,\dots,d_k)}
    \coloneqq \bigcap_{r=1}^k \left\{x\in\Omega: a_r(x)=d_r\right\};
\end{align*}
and for any $\lambda\in\mathbb{N}$, a constant word set:
\begin{align*}
    \Delta_k{\left({\lambda}\right)}
    \coloneqq \bigcap_{r=1}^k \left\{x\in\Omega: a_r(x)=\lambda\right\}.
\end{align*}

A standard continued-fraction computation gives the diameter of an arbitrary cylinder in terms of convergents. Define, for any $k\in\mathbb{N}$ and $(d_1,\dots,d_k)\in\mathbb{N}^k$, $q_k$ be the denominator of the $k$-th convergent of $[d_1,\dots,d_k]$. It is well-known that for any $k\in\mathbb{N}$ and $(d_1,\dots,d_k)\in\mathbb{N}^k$,
\begin{align*}
    q_{-1}=0,\quad q_0=1,\quad q_k=d_k q_{k-1}+q_{k-2},
\end{align*}
and
\begin{align}\label{eq: 17}
    \operatorname{diam}{\Delta{(d_1,\dots,d_k)}}
    =\frac{1}{q_k(q_k+q_{k-1})}.
\end{align}
Let $\varphi\coloneqq(1+\sqrt5)/2$ be the golden ratio. Define $\tau:\mathbb{N}\to[\varphi,+\infty)$ by, for any  $\lambda\in\mathbb {N}$,
\begin{align*}
    \tau\coloneqq\tau(\lambda)\coloneqq \frac{\lambda+\sqrt{\lambda^2+4}}{2}.
\end{align*}

Lemma~\ref{lem: assumption 2} below verifies Assumption~\ref{ass: 2}. Therefore, with both Lemmas~\ref{lem: assumption 1} and~\ref{lem: assumption 2}, the first part of Theorem~\ref{thm: 1} applies and the proof for~\eqref{eq: main-L} in Theorem~\ref{thm: 2} is completed.
\begin{lemma}\label{lem: assumption 2}
For any $\lambda\in\mathbb{N}$, there exist $c_-,c_+>0$ such that for any $k\in\mathbb{N}$,
\begin{align*}
    c_-\,\tau^{-2k}\leq \mu{(\Delta_k{(\lambda)})}\leq c_+\,\tau^{-2k}.
\end{align*}
\end{lemma}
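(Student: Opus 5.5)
The plan is to compare $\mu$ with Lebesgue measure and then compute the length of the constant-word cylinder exactly via~\eqref{eq: 17}.

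\textbf{Step 1: reduce to Lebesgue measure.} Since the density $1/((1+x)\log 2)$ lies in $[1/(2\log 2),1/\log 2]$ on $[0,1)$, we have
\begin{align*}
    \frac{|\Delta_k(\lambda)|}{2\log 2}\leq\mu(\Delta_k(\lambda))\leq\frac{|\Delta_k(\lambda)|}{\log 2}.
\end{align*}
The cylinder $\Delta_k(\lambda)$ is an interval (up to rationals) whose length is its diameter. So it suffices to show $\operatorname{diam}\Delta_k(\lambda)\asymp\tau^{-2k}$, with constants depending only on $\lambda$.

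\textbf{Step 2: solve the recursion for $q_k$.} For the constant word $d_1=\dots=d_k=\lambda$ the recursion is $q_k=\lambda q_{k-1}+q_{k-2}$ with $q_{-1}=0$, $q_0=1$. Its characteristic roots are $\tau$ and $-1/\tau$, since $\tau^2=\lambda\tau+1$. Fitting the initial conditions gives the closed form
\begin{align*}
    q_k=\frac{\tau^{k+1}-(-\tau)^{-(k+1)}}{\tau+\tau^{-1}},
\end{align*}
which one checks at $k=-1,0$. Because $\tau\geq\varphi>1$, we get $\tau^{k+1}\bigl(1-\tau^{-2}\bigr)\leq(\tau+\tau^{-1})\,q_k\leq\tau^{k+1}\bigl(1+\tau^{-2}\bigr)$ for $k\geq 0$. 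Hence $q_k\asymp\tau^{k}$ uniformly in $k$.

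A simpler route avoids the explicit constants. Induction on the recursion gives $\tau^{k-1}\leq q_k\leq\tau^{k}$, using $\tau\cdot\tau^{k-1}\lambda/\tau+\dots$, or more cleanly $q_k\geq\lambda q_{k-1}+q_{k-2}$ together with $\tau^2=\lambda\tau+1$ for both inequalities. In particular $q_{k-1}\leq q_k$.

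\textbf{Step 3: conclude.} From~\eqref{eq: 17},
\begin{align*}
    \frac{1}{2q_k^2}\leq\operatorname{diam}\Delta_k(\lambda)=\frac{1}{q_k(q_k+q_{k-1})}\leq\frac{1}{q_k^2}.
\end{align*}
Inserting $\tau^{k-1}\leq q_k\leq\tau^k$ yields $\tfrac12\tau^{-2k}\leq\operatorname{diam}\Delta_k(\lambda)\leq\tau^{2}\tau^{-2k}$. Combining with Step 1, we may take
\begin{align*}
    c_-=\frac{1}{4\log 2},\qquad c_+=\frac{\tau^2}{\log 2}.
\end{align*}

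\textbf{Main obstacle.} The only real care needed is verifying the two-sided induction bound on $q_k$, including the base cases $k=-1,0,1$. For the lower bound we need $q_{-1}=0\geq\tau^{-2}$ to be avoided, so the induction should start from $q_0=1\geq\tau^{-1}$ and $q_1=\lambda\geq 1=\tau^{0}$. If the induction gets awkward, the exact Binet-type formula from Step 2 gives the bounds directly.
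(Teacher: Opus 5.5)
Your proof is correct and follows essentially the paper's route. You compare the Gauss density with Lebesgue measure, evaluate $\operatorname{diam}\Delta_k(\lambda)$ via~\eqref{eq: 17}, and show $q_k\asymp\tau^k$ from the recursion; the paper uses exactly your Binet formula (note $\tau+\tau^{-1}=\sqrt{\lambda^2+4}$) to get $\frac{\lambda}{\sqrt{\lambda^2+4}}\tau^k\le q_k\le\tau^k$, whereas your induction $\tau^{k-1}\le q_k\le\tau^k$ together with $q_{k-1}\le q_k$ is a slightly more elementary path that simply yields different constants, $c_-=1/(4\log 2)$ and $c_+=\tau^2/\log 2$.
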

\begin{proof}
Pick any $k\in\mathbb{N}$. The denominator $q_k$ of the $k$-th convergent of $[\lambda,\lambda,\ldots]$ satisfies:
\begin{align*}
    \frac{\lambda}{\sqrt{\lambda^2+4}}\tau^{k}\leq q_{k}=\frac{\tau^{k+1}-(-\tau)^{-(k+1)}}{\sqrt{\lambda^2+4}}\leq \tau^{k}.
\end{align*}
By~\eqref{eq: 17},
\begin{align*}
    \frac{\tau}{\tau+1} \tau^{-2k}
    \leq \operatorname{diam}{\Delta_k{(\lambda)}}
    =\frac{1}{q_k(q_k+q_{k-1})}
    \leq\frac{(\lambda^2+4)\tau}{(\tau+1)\lambda^2}\tau^{-2k}
\end{align*}
By~\eqref{eq: 16},
\begin{align*}
     \frac{1}{2\log{2}}\frac{\tau}{\tau+1}\tau^{-2k}\leq \mu{(\Delta_k{(\lambda)})}\leq \frac{(\lambda^2+4)\tau}{(\tau+1)\lambda^2\log{2}}\tau^{-2k}.
\end{align*}
Lemma~\ref{lem: assumption 2} is proved.
\end{proof}

Lemma~\ref{lem: assumption 3} below verifies Assumption~\ref{ass: 3}. Therefore, with Lemmas~\ref{lem: assumption 1},~\ref{lem: assumption 2}, and~\ref{lem: assumption 3}, the second part of Theorem~\ref{thm: 1} applies and the proof for~\eqref{eq: main-R} in Theorem~\ref{thm: 2} is completed.
\begin{lemma}\label{lem: assumption 3}
For any $k\in\mathbb{N}$,
\begin{align*}
    \sum_{\lambda\in\mathbb{N}}\mu{(\Delta_k{\left({\lambda}\right)})}\leq\varphi^{-2(k-1)}.
\end{align*}
\end{lemma}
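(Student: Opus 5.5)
The plan is to bound each summand by the length of the cylinder times the maximal Gauss density $1/\log 2$, and then to treat $\lambda=1$ separately from $\lambda\ge2$. The case $k=1$ is immediate: the sets $\Delta_1(\lambda)=\{a_1=\lambda\}$ partition $\Omega$, so the sum equals $1=\varphi^{0}$. Assume from now on that $k\ge2$. By \eqref{eq: 16} and \eqref{eq: 17}, each summand satisfies
\begin{align*}
    \mu(\Delta_k(\lambda))\le \frac{1}{\log 2}\,\frac{1}{q_k(\lambda)\bigl(q_k(\lambda)+q_{k-1}(\lambda)\bigr)},
\end{align*}
where $q_j(\lambda)$ are the convergent denominators of the constant word $[\lambda,\dots,\lambda]$.

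For $\lambda=1$, the denominators are Fibonacci numbers: $q_k=F_{k+1}$ and $q_k+q_{k-1}=F_{k+2}$. Binet's formula then gives $F_{k+1}F_{k+2}\ge c\,\varphi^{2k+3}/5$ with $c$ close to $1$; I will verify that $c$ can be taken uniformly close to $1$ for $k\ge2$, and otherwise check a few small $k$ by hand. This yields
\begin{align*}
    \mu(\Delta_k(1))\le \frac{5\varphi^{-5}}{c\log 2}\,\varphi^{-2(k-1)}\approx 0.65\,\varphi^{-2(k-1)}.
\end{align*}
So the $\lambda=1$ term uses only about two thirds of the allowed budget.

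For $\lambda\ge2$, the recursion $q_j=\lambda q_{j-1}+q_{j-2}$ gives $q_k\ge\lambda^k$ by induction. Hence each such summand is at most $\lambda^{-2k}/\log2$. Comparing the sum with an integral gives
\begin{align*}
    \sum_{\lambda\ge2}\lambda^{-2k}\le 4^{-k}\Bigl(1+\tfrac{2}{2k-1}\Bigr).
\end{align*}
The ratio of this to $\varphi^{-2(k-1)}$ is $\varphi^{-2}(\varphi^2/4)^k(1+2/(2k-1))$, which decays geometrically because $\varphi^2/4<1$. The bound $q_k\ge\lambda^k$ is loose, and if the constants are too tight it can be sharpened to $q_k\ge\lambda q_{k-1}+1$, for instance.

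The main obstacle is purely numerical: showing that the two contributions together stay below $\varphi^{-2(k-1)}$ for every $k\ge2$, and not just asymptotically. The hardest cases are the small values $k=2,3$, where the $\lambda\ge2$ tail is largest. For these I would compute $\mu(\Delta_k(\lambda))$ exactly from \eqref{eq: 16}, using the explicit endpoints of the cylinders (rational functions of $\lambda$). For large $k$, the geometric decay of the $\lambda\ge2$ contribution, together with the constant $0.65$ for $\lambda=1$, closes the inequality.
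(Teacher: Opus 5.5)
Your plan is essentially the paper's proof: split off $\lambda=1$, bound each $\mu(\Delta_k(\lambda))$ by the Gauss density times the cylinder length from \eqref{eq: 17}, and use $q_k\ge\lambda^k$ for $\lambda\ge2$. The paper differs only in using the sharper density bound $2/3$ on $\Delta_k(1)\subset(1/2,1)$ together with $\operatorname{diam}\Delta_k(1)\le\varphi^{-(2k-1)}$, in place of Binet. Your small-$k$ fallback is genuinely needed, because with your stated constants $k=2$ gives roughly $0.63+0.39>1$ in units of $\varphi^{-2}$. The paper avoids any case check by writing $\lambda^{-2k}\le 4^{-(k-2)}\lambda^{-4}$ and summing to $\pi^4/90-1$. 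That tail bound combined with your own $\lambda=1$ estimate already keeps the total below $0.97\,\varphi^{-2(k-1)}$ for every $k\ge2$.
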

\begin{proof}
For $k=1$, the estimate is trivial. By~\eqref{eq: 16} and~\eqref{eq: 17}, for any $k\in\mathbb{N}\setminus\{1\}$,
\begin{align*}
    \sum_{\lambda\in\mathbb{N}}\mu{(\Delta_k{\left({\lambda}\right)})}
    &= \frac{1}{\log{2}}\sum_{\lambda\in\mathbb{N}}\int_{\Delta_k{(\lambda)}}\frac{\mathrm{d}x}{1+x} \\
    &\leq \frac{1}{\log 2}\int_{\Delta_k{(1)}}\frac{\mathrm{d}x}{1+x}+\frac{1}{\log 2}\sum_{\lambda=2}^{+\infty}\operatorname{diam}{\Delta_k{(\lambda)}} \\
    &\leq \frac{2}{3\log 2}\int_{\Delta_k{(1)}}\mathrm{d}x+\frac{1}{\log 2}\sum_{\lambda=2}^{+\infty} \lambda^{-2k}\\
    &\leq \frac{2}{3\log 2}\operatorname{diam}{\Delta_k{(1)}}+\frac{4^{-(k-2)}}{\log 2}\sum_{\lambda=2}^{+\infty} \lambda^{-4}\\
    &\leq \frac{2\varphi^{-(2k-1)}}{3\log 2} +\frac{\varphi^{-2(k-2)}}{\log 2}\left(\frac{\pi^4}{90}-1\right) \\
    &= \frac{1}{\log{2}}\left(\frac{2}{3\varphi}+\left(\frac{\pi^4}{90}-1\right)\varphi^{2}\right)\varphi^{-2(k-1)}.
\end{align*}
Lemma~\ref{lem: assumption 3} is proved.
\end{proof}

\bibliographystyle{siam} 
\bibliography{name}

\end{document}